\newtheorem{thm}{Theorem}[section]
\theoremstyle{definition}
\newcommand{\comment}[1]{}
\theoremstyle{remark}
\numberwithin{equation}{section}
\newcommand{\R}{\mathbb R}
\begin{document}

\title[A priori estimates for NLS]{Remarks on global a priori estimates for the nonlinear Schr\"odinger equation.}
\author{J. Colliander}
\address{Department of Mathematics, University of Toronto, Toronto, ON, Canada M5S 2E4}
\email{\tt colliand@math.toronto.edu}

\author{M. Grillakis}
\address{Department of Mathematics, University of Maryland,
College Park, MD 20742} \email{\tt mng@math.umd.edu}

\author{N. Tzirakis}
\address{Department of Mathematics, University of Illinois at Urbana-Champaign, Urbana, IL, 61801, USA}
\email{tzirakis@math.uiuc.edu}
\date{7 July 2009}

\subjclass{}

\keywords{}
\begin{abstract}
We present a unified approach for obtaining global a priori estimates for solutions of nonlinear defocusing Schr\"odinger equations with defocusing nonlinearities. The estimates are produced by contracting
 the local momentum conservation law with appropriate vector fields. The corresponding law is written for defocusing equations of tensored solutions. 
 In particular, we obtain a new estimate in two dimensions. We bound
 the restricted $L_t^4L_{\gamma}^4$ Strichartz norm of the solution on
 any curve $\gamma$ in $\mathbb R^2$. For the specific case of a straight line we upgrade
 this estimate to a weighted Strichartz estimate valid in the full plane.   
\end{abstract}
\maketitle

\section{Introduction}
In this paper, we prove global-in-time mixed Lebesgue spaces estimates for solutions of the semilinear Schr\"odinger equation
\begin{equation}\label{nls}
\left\{
\begin{matrix}
iu_{t}-\Delta u +|u|^{p-1}u=0, & x \in {\mathbb R^n}, & t\in {\mathbb R},\\
u(x,0)=u_{0}(x)\in H^{s}({\mathbb R^n})
\end{matrix}
\right.
\end{equation}
for any $p>1$. 
Equation \eqref{nls} satisfies the following conservation laws:
Energy conservation
\begin{equation}\label{energy}
E(u)(t)=\frac{1}{2}\int |\nabla u(t)|^{2}dx+\frac{1}{p+1}\int |u(t)|^{p+1}dx=E(u_{0}). 
\end{equation}
Mass conservation 
\begin{equation}
\|u(t)\|_{L^{2}}=\|u_{0}\|_{L^{2}},\label{mass}
\end{equation}
and momentum conservation
\begin{equation}\label{momentum}
\vec{p}(t)=\Im \int_{\Bbb R^n}\bar{u}\nabla u dx.
\end{equation}
The conservation laws of mass and energy identify $H^1$ as the energy space. Moreover, membership of the solution in this space provides an a priori bound on the $H^1$ norm of the solution. Since the problem is
locally well-posed in $H^{1}$ whenever $1<p<1+\frac{4}{n-2}$ for $n
\geq 3$ and for $1<p<\infty$ for $n=1,2$, the conservation control enables one to iterate the local-in-time solutions over any large time
 interval maintaing the same uniform $H^{1}$ bound. Thus, the problem is globally well-posed. For all the relevant definitions and the local properties of the solutions the reader can consult 
\cite{tc}, \cite{tt}.  
Since the Cauchy problem is globally well-posed, our attention turns
toward the issue of describing and classifying the asymptotic behavior in time
 for global solutions. A possible method to attack this issue is to compare the given dynamics with suitably chosen simpler asymptotic dynamics.
 The method applies to a wide variety of dynamical systems and in
 particular to some systems defined by nonlinear PDE, and give rise to
 the scattering theory. 
 
For the semilinear problem \eqref{nls}, the first obvious candidate
for the simplified asymptotic behavior is the free dynamics generated by the group $S(t)=e^{-it\Delta}$. The comparison
 between the two dynamics gives rise to the questions of the existence of wave operators and of the asymptotic completeness of the solutions. More precisely we have:
\\
\\
{\bf (a)} Let $v_{+}(t)=S(t)u_{+}$ be the solution of the free equation. Does there exist a solution $u$ of equation \eqref{nls} which behaves asymptotically
 as $v_{+}$ as $t \rightarrow \infty$, typically in the sense that , $\|u(t)-v_{+}\|_{H^{1}} \rightarrow 0, \ \ \mbox{as $t \rightarrow \infty$.}$ If this is 
true then one can define the map $\Omega_{+}: u_{+} \rightarrow u(0)$. The map is called the wave operator and
 the problem of existence of $u$ for given $u_{+}$ is referred to as the problem of the {\it{existence of the wave operator}}. The analogous problem
 arises as $t \rightarrow -\infty$.
\\
\\ 
{\bf (b)} Conversely, given a solution $u$ of \eqref{nls}, does there exist an asymptotic state $u_{+}$ such that $v_{+}(t)=S(t)u_{+}$ behaves 
asymptotically as $u(t)$, in the above sense. If that is the case for any $u$ with initial data in $X$ for some $u_{+} \in X$,
 one says that {\it{asymptotic completeness}} holds in $X$.
\\
\\ 
Asymptotic completeness is a much harder problem than the existence of
the wave operators except in the case of small data theory which
follows from the iteration method proof of the local
 well-posedness. Asymptotic completeness for large data requires a repulsive nonlinearity and
 usually proceeds through the derivation of a priori estimates for general solutions. The question of scattering or in general the question of dispersion of the nonlinear solution is tied to weather there is
 some sort of decay in a certain norm, such as the $L^p$ norm for $p>2$. In particular knowing the exact rate of decay of various $L^p$ norms for the linear solutions, it would be ideal to obtain estimates
 that establish similar rates of decay for the nonlinear problem. For
 a discussion comparing the decaying rates of the linear and the nonlinear problem the reader can consult \cite{tc} and the
 references therein. The decay of the linear solutions can immediately
 establish weak quantum scattering in the energy space but to estimate
 the linear and the nonlinear dynamics in the energy norm we usually
 look to prove that the $L^p$ norm of the nonlinear solution to go to zero as $t \rightarrow \infty$.  

Strichartz type estimates assure us that certain $L^p$ norms decay
asymptotically for large time but only for the linear part of the solution. For the nonlinear part we need to obtain 
general decay estimates on solutions of defocusing equations. The mass and energy conservation laws establish the boundedness of the $L^2$ and the $H^1$ norms but are 
insufficient to provide a decay for higher powers of Lebesgue norms. In this note we provide a summary of recent and new results that 
demonstrates a straightforward method to obtain such estimates by taking advantage of the momentum conservation law \eqref{momentum}. One can 
construct various quantities based on the momentum density which are monotone in time and so the fundamental theorem of calculus provides space-time bounds for general solutions.

Conservation laws come in both integral
 and differential forms. The differential form of the conservation law
 includes flux terms and can sometimes be localized in space-time by integrating against suitable cut-off functions or
 contracting against a suitable vector field to produce an almost
 conserved or monotone quantity. Thus, from a single differential conservation 
law, one can generate a variety of useful estimates, which can constrain the direction of propagation of a solution or provide a decay estimate for the solution or components of the solution (e.g. the low or high frequency part of the solution).

A key example of these ideas is contained in the following generalized virial inequality{\footnote{In fact, one can write an identity.}}
 of Lin and Strauss \cite{LinStrauss}. (We recall the proof of this inequality in the next section.)
\vskip 0.1 in
\begin{equation}\label{linstr}
\\
\int_{0}^{T}\int_{\Bbb R^n}(-\Delta \Delta a(x))|u(x,t)|^2dxdt+\int_{0}^{T}\int_{\Bbb R^n}(\Delta a(x)) {|u(x,t)|^{p+1}} dxdt \lesssim
 \sup_{[0,T]}|M_{a}(t)|
\end{equation}
\\
where $a(x)$ is a convex function, $u$ is a solution to \eqref{nls} and $M_{a}(t)$ is the {\it{Morawetz action}} defined by
\\
\begin{equation}\label{mora}
M_{a}(t)=2\int_{\Bbb R^n}\nabla a(x) \cdot \Im(\bar{u}(x)\nabla u(x)) dx.
\end{equation}
\vskip 0.1 in
Note that throughout the text we use the symbol $\lesssim$ to supress inessential constants from the inequalities we present. An inequality of this form was first derived in the context of the
Klein-Gordon equation by Morawetz \cite{cm} and then extended to the NLS equation in \cite{LinStrauss}. The inequality was applied to prove asymptotic completeness
 first for the nonlinear Klein-Gordon and then for the NLS equation in the papers by Morawetz and Strauss, \cite{ms}, and by Lin and Strauss, \cite{LinStrauss} for slightly more regular solutions in space dimension
$n \geq 3$. The case of general finite energy solutions for $n \geq 3$ was treated in \cite{gv} for the NLS and in \cite{gv1} for the Hartree. The treatment was then improved to the harder case of low dimensions by
 Nakanishi, \cite{kn}, \cite{kn1}. The vector field method that we outline in this paper, applied to tensor product of two or more different solutions, gives stronger estimates and simplifies the proofs of
 the results in the papers cited above. The method of contracting the point-wise conservation laws with a vector field is not new. It has been used extensively for the wave equation in the past. See \cite{tt} and
 the references therein for more information.

The weight function that was commonly used in the past was $a(x)=|x|$. 
This choice has the advantage that the distribution $-\Delta \Delta (\frac{1}{|x|})$ is positive for $n \geq 3$. More precisely it is easy to compute that $\Delta a(x)=\frac{n-1}{|x|}$ and that
$$-\Delta \Delta a(x)=\left\{
\begin{array}{cc}
8\pi\delta(x), & \text{if} \quad  n=3\\
\frac{(n-1)(n-3)}{|x|^2}, & \text{if} \quad n \geq 4.
\end{array}
\right.
$$
In particular, the computation gives the following estimate for $n=3$
\begin{equation}\label{linstr}
\int_{\Bbb R_t}|u(t,0)|^2dt+\int_{0}^{T}\int_{\Bbb R^3}\frac{|u(x,t)|^{p+1}}{|x|} dxdt \lesssim \sup_{t}\|u(t)\|_{\dot{H}^{\frac{1}{2}}}^{2}.
\end{equation}
The second, nonlinear term, or certain local versions of it, have played central role in the scattering theory for the 
 nonlinear Schr\"odinger equation, \cite{jbou}, \cite{gv}, \cite{mg}, \cite{LinStrauss}. The first term did not play a big role in
 these works.
The fact that in 3d the bi-harmonic operator acting on the weight
$a(x)$ produces the $\delta-$measure can be exploited further. The
following idea, first appearing in 
  \cite{ckstt4}, takes advantage of the first linear term and provides
  a global a priori estimate valid for any defocusing nonlinearity.
We consider two separate solutions of the nonlinear Schr\"odinger equation and take the tensor product of them in $\Bbb R^{6}:= \Bbb R^3 \times \Bbb R^3$. We define a new weight $a(x)=|y-z|$ where 
$x=(y,z) \in \Bbb R^3 \times \Bbb R^3$. We have provided the details of the calculation in the next section but let us mention that the heart of the matter is that
$$-\Delta_x \Delta_x (|y-z|)=32\pi \delta(y-z)$$ 
where $\Delta_x=\Delta_{y}+\Delta_{z}$. We can now define a new
Morawetz action that exploits the correlation of the two mass
densities and the positivity of its time derivative. Substituting this result in the first linear term using the new density 
\\
$$|u(x,t)|^{2}=|u(y,t)|^2|u(z,t)|^2,$$
we obtain 
$$\int_{\Bbb R_t}\int_{\Bbb R^3}|u(x,t)|^{4}dx \lesssim C$$
\\
for solutions that stay in the energy space.
This estimate is not immediately useful for extending the energy-critical theory and removing the radial assumption on the solutions that was achieved in \cite{jbou}. 
Nevertheless a frequency localized version of this estimate has been 
successfully implemented to remove the radial assumption, and prove global well-posedness and scattering for the energy-critical (quintic) equation in 3d, \cite{ckstt5}. Further applications include a
very simple proof of the 3d energy scattering result of Ginibre and Velo, \cite{gv} for the $L^2$ super-critical NLS, which is the case for any $p>\frac{7}{3}$. Similar estimates can be obtained in any dimension
 higher than three and they can treat the higher dimensional scattering theory in a unified way. For details see, \cite{tvz1}. The idea to bypass the problem of the quite nasty
 distribution $-\Delta \Delta (|x|)$ in low dimensions and prove similar estimates without a restriction on the dimension was taken in \cite{cgt}. See also \cite{pv}.

In this note we want to comment on the original idea of using the 3d reduction on the solutions and obtain linear Strichartz type estimates. Again the idea is to take advantage of the
special calculation in 3d of the weight function $a(x)$. The numerology is as follows: We correlate two 3d solutions and we define a solution in $6=3+3$ dimensions. If we integrate the product density against the 
$\delta$ distribution  we obtain an estimate for solutions in 3d, and thus we managed a reduction of $3=6-3$. 
It is not clear how to take advantage of this idea in other dimensions. To obtain for example a 1d estimate, using this idea and after a ``dimensional reduction'' of 3 we have to correlate four solutions. 
We can thus define a weight function in four dimensions. The appropriate weight function is the distance of a point in $\Bbb R^4$ from the diagonal $(x,x,x,x)$. This provides an $L_{t}^{8}L_{x}^{8}$ 
a priori estimate, \cite{chvz}. If the dimension is higher than 4, correlation of two solutions, produces a solution on $\Bbb R^n$, when $n$ is at least $8$ and the reduction to $5=8-3$ is not immediately useful.
 Of course correlation of more than two, apart form the technical issues, obviously doesn't improve the situation for numerical reasons. If we correlate two solutions in two dimensions we obtain a
 solution in 4d and after the reduction we are left with a possible 1d estimate. This is the approach we follow in this paper. We define the weight function to be the distance of a point in $\Bbb R^4$ from the
 diagonal $\vec{x}_1=\vec{x}_2=\vec{x}(l)$ where $\vec{y}=(\vec{x}_1,\vec{x}_2) \in \Bbb R^2 \times \Bbb R^2$ and $\vec{x}(l)$ is the position vector of a straight line. We obtain an estimate that involves a line
 integral that can be upgraded in a full 2d integral by an averaging argument. We present the calculations in section 2 but our theorem reads as follows:
\begin{thm}[Correlation estimate in two dimensions]
\label{thm1}
 Let $u$ be an $H^{\frac{1}{2}}$ solution to \eqref{nls} on the space-time slab $I\times {\Bbb R}^2$. Then for any $x_{0} \in \Bbb R^2$ we have
\begin{equation}\label{22d}
\sup_{x_{0}}\int_{I}\int_{\Bbb R^2} \frac{|u(x,t)|^{4}}{|x-x_{0}|}dxdt \lesssim  \sup_{t}\|u(t)\|_{L^2}^2\|u(t)\|_{\dot{H}^{\frac{1}{2}}}^{2}
\end{equation}
\end{thm}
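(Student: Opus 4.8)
The plan is to realize the left-hand side of \eqref{22d} as the average, over all straight lines through $x_0$, of a one-dimensional ``restricted'' Strichartz quantity, and to produce the latter by the tensor (interaction) Morawetz mechanism sketched in the introduction. First I would tensor the solution with itself: set $U(t,\vec{x}_1,\vec{x}_2)=u(t,\vec{x}_1)u(t,\vec{x}_2)$ on $I\times\mathbb{R}^4$, with $\mathbb{R}^4=\mathbb{R}^2\times\mathbb{R}^2$. Fix a line $\ell\subset\mathbb{R}^2$, write $\vec{x}(l)=\vec{a}+l\vec{v}$ for its points, and let $D_\ell=\{(\vec{x}(l),\vec{x}(l)):l\in\mathbb{R}\}\subset\mathbb{R}^4$ be the lifted diagonal line. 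Take the weight $a(\vec{y})=\mathrm{dist}(\vec{y},D_\ell)$. Since $D_\ell$ is an affine line, $a$ is convex and depends only on the three coordinates orthogonal to $D_\ell$; consequently the four-dimensional Laplacian acting on $a$ reduces to the three-dimensional radial Laplacian of $|\cdot|$ in the orthogonal complement. The arithmetic that made the three-dimensional case work then reappears, $-\Delta_{\vec{y}}\Delta_{\vec{y}}\,a=8\pi\,\delta_{D_\ell}$ as a distribution on $\mathbb{R}^4$. This is where the ``reduction by three'' is bought: the codimension of $D_\ell$ is exactly three.

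Second, I would write the local momentum conservation law for $U$ and contract it against $\nabla a$, running the Lin--Strauss computation for the tensored solution with Morawetz action $M_a(t)=2\int_{\mathbb{R}^4}\nabla a\cdot\Im(\bar U\nabla U)\,d\vec{y}$ as in \eqref{mora}. Integrating in time and discarding the nonnegative interaction-potential term (which has the good sign because $a$ is convex and the nonlinearity in \eqref{nls} is defocusing) leaves
\begin{equation*}
\int_I\!\int_{\mathbb{R}^4}(-\Delta\Delta a)\,|U|^2\,d\vec{y}\,dt\;\lesssim\;\sup_{t\in I}|M_a(t)|.
\end{equation*}
Feeding in $-\Delta\Delta a=8\pi\,\delta_{D_\ell}$ collapses the spatial integral onto $D_\ell$, and since $|U|^2$ restricted to the diagonal equals $|u(\vec{x}(l))|^4$, the left side becomes a constant times the line quantity $\int_I\int_\ell|u(\vec{x}(l),t)|^4\,dl\,dt$. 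It remains to bound $M_a$; because $|\nabla a|\le 1$ uniformly in $\ell$, the resulting bound will be uniform over all lines.

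Third, I would upgrade from one line to the full plane by averaging. For fixed $x_0$, apply the line estimate to every line through $x_0$, parametrized by its direction $\omega(\theta)=(\cos\theta,\sin\theta)$ with $\theta\in[0,\pi)$; the points of the line are $x_0+r\omega(\theta)$, $r\in\mathbb{R}$, so the line integral is $\int_{\mathbb{R}}|u(x_0+r\omega(\theta),t)|^4\,dr$. Integrating the uniform bound over $\theta\in[0,\pi)$ and passing to polar coordinates centered at $x_0$ — where $|x-x_0|=|r|$ and $dx=|r|\,dr\,d\theta$, hence $dr\,d\theta=dx/|x-x_0|$ — converts $\int_0^\pi\int_{\mathbb{R}}(\cdots)\,dr\,d\theta$ into $\int_{\mathbb{R}^2}|u(x,t)|^4|x-x_0|^{-1}\,dx$. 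This yields \eqref{22d} with a constant independent of $x_0$, so the supremum over $x_0$ is harmless.

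The main obstacle is the sharp bound $\sup_t|M_a(t)|\lesssim\|u\|_{L^2}^2\|u\|_{\dot H^{1/2}}^2$. The crude estimate $|\nabla a|\le1$ followed by Cauchy--Schwarz only gives the lossy $\|u\|_{L^2}^3\|u\|_{\dot H^{1}}$; extracting the $\dot H^{1/2}$ scaling requires exploiting the structure $M_a=2\,\Im\int\bar U(\nabla a\cdot\nabla U)$, writing $\nabla a$ as the unit radial field of the three orthogonal directions, and estimating by $\dot H^{1/2}$--$\dot H^{-1/2}$ duality together with the fact that the radial derivative in those directions is, up to a bounded zero-homogeneous multiplier, comparable to $|\nabla|$. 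This is precisely the mechanism behind the three-dimensional estimate recalled in the introduction, now carried out in the orthogonal complement of $D_\ell$, with the two undifferentiated $L^2$ factors supplied by the tensoring and the along-line integration; one checks directly that the two $L^2$ and one $\dot H^{1/2}$ powers are the scale-invariant combination matching \eqref{22d}. A secondary point is to justify the distributional identity for $-\Delta\Delta a$ and the legitimacy of testing it against $|U|^2$ at the $H^{1/2}$ regularity of the theorem, which I would handle by a standard regularization and limiting argument.
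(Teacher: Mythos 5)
Your proposal is correct and follows essentially the same route as the paper: tensoring $u$ with itself on $\mathbb{R}^2\times\mathbb{R}^2$, contracting the momentum conservation law with the gradient of the distance to the lifted diagonal line (whose codimension $3$ yields $-\Delta\Delta a=8\pi\,\delta_{D_\ell}$, exactly the paper's $-\Delta(\operatorname{div}\vec X)=-\Delta(2/d)=8\pi\delta$), discarding the two nonnegative terms, and then averaging the resulting line estimate over directions through $x_0$ in polar coordinates. The only cosmetic differences are that you justify the positivity of the Hessian term by convexity of the distance function where the paper computes $\nabla_j X^k\sigma_k^j=\frac{2}{d}\bigl(|\nabla_z u|^2-\frac{1}{|z|^2}|(z\cdot\nabla_z)u|^2\bigr)\ge 0$ explicitly, and that you sketch the bound $\sup_t|M_a(t)|\lesssim \|u\|_{L^2}^2\|u\|_{\dot H^{1/2}}^2$ via duality and homogeneous-multiplier considerations (correctly noting that $|\nabla a|\le 1$ alone is insufficient) where the paper invokes the momentum estimate from the appendix of \cite{tt} after factoring the tensored momentum density.
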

Note that we have dropped the nonnegative term arising from the
nonlinear term from the right side of \eqref{22d}. Thus, the linear
estimate \eqref{22d} is valid for all nonlinear evolutions with a
defocusing nonlinear term.

\vskip 0.1 in
\section{Method and its applications.}
We start{\footnote{Similar arguments apply in the setting of a more
    general defocusing nonlinearity.}} with the equation 
\begin{equation}\label{nls2}
iu_{t}-\Delta u=|u|^{p-1}u
\end{equation}
with $p \geq 1$. The two dimensional estimate that we will derive is linear and thus holds for all values of $p \geq 1$. For that reason we restrict ourselves to the important physical case of the cubic 
and thus we pick $p=3$. The reader will notice that we use Einstein's summation convention throughtout the paper. According to this convention, when an index variable appears twice in a single term, once in an upper (superscript) and once in a lower (subscript) position, it implies that we are summing over all of its possible values.
\\
\\
We define the mass density $\rho$ and the momentum vector $\vec{p}$, by the relations
$$\rho=\frac{1}{2}|u|^2,\ \ \ \ \ \ \ \ \ \ \ \ p_{k}=\Im (\bar{u}\nabla_{k}u).$$
It is well known that the abstract solution to the semilinear Schr\"odinger equation satisfies mass and momentum conservation. The local conservation of mass reads
\begin{equation}\label{conmass}
\partial_{t}\rho=div{\vec{p}}=\nabla_{j}p^{j}
\end{equation}
and the local momentum conservation is 
\begin{equation}\label{conmom}
\partial_{t}p_{k}=\nabla_{j}\left( \delta_{k}^{j}(2\rho^{2}-\Delta \rho)+\sigma_{k}^{j} \right)
\end{equation}
where the tensor $\sigma_{kj}$ is given by
$$\sigma_{jk}=2\Re(\nabla_{j}u\nabla_{k}\bar{u})=\frac{1}{\rho}(p_{j}p_{k}+\nabla_{j}\rho \nabla_{k}\rho).$$
Notice that the term $2\rho^{2}$ is the only nonlinear term that
appears in the expression. For the general monomial defocusing
nonlinearity $|u|^{p-1}u$ this term would take the form $2^{\frac{p+1}{2}}\frac{p-1}{p+1}\rho^{\frac{p+1}{2}}$.
We now define the Morawetz action
$$M_{a}(t)=-\int_{\Bbb R^{n}}\nabla a \cdot \vec{p}\ dx=-\int_{\Bbb R^{n}}\nabla_{j}a \ p^{j}\ dx$$ 
where the weight function $a(x)$ is given by $a(x)=|x|$ and thus $\nabla_{j}a=\frac{x_{j}}{|x|}$.
The following computation is due to Lin and Strauss, \cite{LinStrauss}, although we have adapt the proof to serve our own purposes. If we contract the momentum conservation equation with the vector field 
$\vec{X}=\nabla a$ we
 obtain
$$X^{k}\ \partial_{t}p_{k}=X^{k}\ \nabla_{j}\left( \delta_{k}^{j}(2\rho^{2}-\Delta \rho)+\sigma_{k}^{j} \right)$$ or that
$$\partial_{t}(X^{k}\ p_{k})=\nabla_{j}\{X^{k}\ \delta_{k}^{j}(2\rho^{2}-\Delta \rho)+X^{k}\ \sigma_{k}^{j}\}-(\nabla_{j}X^{k})\delta_{k}^{j}(2\rho^2-\Delta \rho)-(\nabla_{j}X^{k})\sigma_{k}^{j}.$$
\\
Integrating over the whole space, assuming enough decay for the solutions (an assumption that can be easily removed by a standard approximation argument), we have
\\
$$-\partial_{t}\int_{\Bbb R^{n}}p_{k}X^{k}dx=\int_{\Bbb R^n}(\nabla_{j} X^{j})(2\rho^2-\Delta \rho)dx+\int_{\Bbb R^n}(\nabla_{j} X^{k})\sigma_{k}^{j}dx=$$
\\
$$\int_{\Bbb R^n}(div X)(2\rho^2-\Delta \rho)dx+\int_{\Bbb R^n}(\nabla_{j} X^{k})\sigma_{k}^{j}dx.$$
\\
Now we compute
$$\nabla_{j}X^{k}=\frac{\delta_{j}^{k}|x|^{2}-x_{j}x^{k}}{|x|^3},$$ and
$$div X=\frac{n-1}{|x|},$$
and thus
\\
$$\partial_{t}M=-\partial_{t}\int_{\Bbb R^{n}}p_{k}X^{k}\ dx=(n-1)\int_{\Bbb R^n}\frac{1}{|x|}(2\rho^2-\Delta \rho)\ dx+\int_{\Bbb R^n}(\nabla_{j} X^{k})\sigma_{k}^{j}\ dx=$$
\\
$$2(n-1)\int_{\Bbb R^n}\frac{\rho^2}{|x|}\ dx+(n-1)\int_{\Bbb R^{n}}\left(-\Delta (\frac{1}{|x|})\right)\rho\ dx+\int_{\Bbb R^n}(\nabla_{j} X^{k})\sigma_{k}^{j}\ dx$$
\\
by integration by parts. The first term of the right hand side is positive. In addition for $n \geq 3$ the distribution $-\Delta (\frac{1}{|x|})$ is positive and thus we have
\\
$$\partial_{t}M \geq 2(n-1)\int_{\Bbb R^n}\frac{\rho^2}{|x|}\ dx+\int_{\Bbb R^n}(\nabla_{j} X^{k})\sigma_{k}^{j}\ dx.$$
\\
If we recall that $\sigma_{jk}=2\Re(\nabla_{j}u\nabla_{k}\bar{u})$ an explicit calculation reveals that 
\\
$$(\nabla_{j} X^{k})\sigma_{k}^{j}=\frac{2}{|x|}\left( |\nabla u|^{2}-\frac{1}{|x|^{2}}|(x\cdot \nabla)u|^{2}\right) \geq 0,$$
\\
and thus for any $n\geq 3$ we have
$$\partial_{t}M \geq 2(n-1)\int_{\Bbb R^n}\frac{\rho^2}{|x|}\ dx.$$ 
\\
Applying the fundamental theorem of calculus we obtain
$$\int_{\Bbb R_{t}}\int_{\Bbb R^n}\frac{|u(x,t)|^{4}}{|x|}\ dx \lesssim \sup_{t} |M(t)|.$$
\\
By definition of $M(t)$ and Hardy's inequality, \cite{ckstt4}, we obtain that
$$\int_{\Bbb R_{t}}\int_{\Bbb R^n}\frac{|u(x,t)|^{4}}{|x|}\ dx \lesssim \sup_{t} \|u(t)\|_{\dot{H}^{\frac{1}{2}}}^2.$$
\\
For the monomial defocusing nonlinearity of degree p it follows that the estimate 
$$\int_{\Bbb R_{t}}\int_{\Bbb R^n}\frac{|u(x,t)|^{p+1}}{|x|}dx \lesssim \sup_{t} \|u(t)\|_{\dot{H}^{\frac{1}{2}}}^2$$
is valid for all $n \geq 3$. 
\\
\\
{\it Remark.} Note that this is a nonlinear estimate that we obtained by estimating away the two linear, positive terms. As we have already mentioned in the introduction, in this paper we 
will establish the following estimate that is true for any defocusing nonlinearity in
 two dimensions (this is the reason that we will sometimes call it a linear estimate)
\\
$$\int_{\Bbb R_{t}}\int_{\Bbb R^2}\frac{|u(x,t)|^{4}}{|x|}dx \lesssim \sup_{t} \|u(t)\|_{L^2}^2\|u(t)\|_{\dot{H}^{\frac{1}{2}}}^2.$$ 
\\
The idea goes back to \cite{ckstt4} and we outline the method below. This is a decay estimate, as it shows that the quantity $\int_{\Bbb R^2}\frac{|u(x,t)|^4}{|x|}dx$ must go to zero, at least in some time-average sense. Because
 the weight $\frac{1}{|x|}$ is large at the origin, the solution
 cannot remain lower bounded near the origin for extended period of times. This is a nonlinear effect caused by the defocusing nature of the nonlinearity. 
The estimate is especially useful for spherically symmetric solutions since such solutions already decay away from the origin.
\vskip 0.3 in
We start again with equation \eqref{nls2} in three dimensions and we consider two solutions $u_1(x_{1},t), u_2(x_2,t)$ with $(x_1,x_1) \in \Bbb R^{3} \times \Bbb R^{3}$. The reader can observe that our 
exposition can be carried in any dimension if we work with product functions in $\Bbb R^n \times \Bbb R^n$. We form the tensor product
$$u(x_1,x_2,t)=u_1(x_1,t)u_2(x_2,t):=u_1u_2.$$ 
\\
It is not hard to see that the tensor product satisfies the equation
$$iu_{t}-\Delta_{6} u+f(u)=0$$
where
$$f(u)=|u_1|^2u+|u_2|^2u$$
and $\Delta_{6}$ is the Laplacian in $\Bbb R^6=\Bbb R^3 \times \Bbb R^3$. Thus for $x =(x_1, x_2)\in \Bbb R^6$ we have 
\\
$$\nabla=(\nabla_{x_1},\nabla_{x_2}),\ \ \ \ \ \ \Delta_{6}=\nabla \cdot \nabla =(\nabla_{x_1},\nabla_{x_2}) \cdot (\nabla_{x_1},\nabla_{x_2})= \Delta_{x_1}+\Delta_{x_2}.$$ 
\\
The crucial observation is that the equation 
stays defocusing and we expect to exploit this positivity to obtain new a priori estimates. We write $\rho_{a}=\frac{1}{2}|u_{a}|^2$ for $a=1,2$ and thus
$$\rho=\frac{1}{2}|u|^2=2\rho_{1}\rho_2.$$
In this notation, the local conservation of mass is given by
$$\partial_{t}\rho=\nabla \cdot \Im (\bar{u}\nabla u)$$ 
and the local conservation of momentum is given by
\\
$$\partial_{t}p_{k}=\nabla_{j}\left( \delta_{k}^{j}(\Phi(\rho)-\Delta \rho)+\sigma_{k}^{j} \right)$$
with
$$\Phi(\rho)=4\rho_1\rho_2(\rho_1+\rho_2), \ \ \ \ \ \ p_{k}=\Im (\bar{u}\nabla_k u),\ \ \ \ \sigma_{kj}= 2\Re(\nabla_{j}\bar{u}\nabla_{k}u).$$
\\
Again, contraction with a vector field leads to the equation
\vskip 0.05 in
\begin{equation}\label{vf}
\partial_{t}(X^{k}\ p_{k})=\nabla_{j}\{X^{k}\ \delta_{k}^{j}(\Phi(\rho)-\Delta \rho)+X^{k}\ \sigma_{k}^{j}\}-div \vec{X}(\Phi(\rho)-\Delta \rho)-(\nabla_{j}X^{k})\sigma_{k}^{j}.
\end{equation}
\vskip 0.05 in 
Now we consider $\vec{x}=(\vec{y},\vec{z})\in \Bbb R^6$ and define the distance of $\vec{x}$ from the average of its components $(\frac{\vec{y}+\vec{z}}{2},\frac{\vec{y}+\vec{z}}{2})$. Here
 we assume that $x_1=y$ and that $x_2=z$. Up to a nonessential constant
 this is given by
\\
$$d(\vec{x})=|\vec{y}-\vec{z}|=\sqrt{(y_{1}-z_1)^2+(y_{2}-z_2)^2+(y_{3}-z_3)^2}$$
\\
and the vector field is defined to be the gradient of this distance $X^{j}=\nabla^{j}d$. A quick computation shows that 
$$div \vec{X}=\frac{4}{d}=\frac{4}{|y-z|}$$
and that
$$-\Delta(div \vec{X})=-\Delta_{y}(\frac{4}{d})-\Delta_{z}(\frac{4}{d})=32\pi \delta(\vec{y}-\vec{z}).$$
\\
Thus, if we integrate \eqref{vf} we obtain
\\
$$\partial_{t}M=-\partial_{t}\int_{\Bbb R^{6}}p_{k}X^{k}dx=\int_{\Bbb R^6}(div \vec{X})(\Phi(\rho)-\Delta \rho)dx+\int_{\Bbb R^6}(\nabla_{j} X^{k})\sigma_{k}^{j}dx=$$
\\
$$\int_{\Bbb R^6}(div \vec{X})\Phi(\rho)dx+\int_{\Bbb R^6}-\Delta (div \vec{X})\rho \ dx+\int_{\Bbb R^6}(\nabla_{j} X^{k})\sigma_{k}^{j}dx \geq $$
\\
$$\int_{\Bbb R^6}-\Delta (div \vec{X})\rho \ dx+\int_{\Bbb R^6}(\nabla_{j} X^{k})\sigma_{k}^{j}dx \geq \int_{\Bbb R^6}32\pi \delta (\vec{y}-\vec{z})\frac{1}{2}|u_1(y,t)|^2|u_2(z,t)|^2 \ dx$$
\\
if
$$\int_{\Bbb R^6}(\nabla_{j} X^{k})\sigma_{k}^{j}dx \geq 0.$$
Note that as in the single particle solution case
$$M(t)=-\int_{\Bbb R^{6}}\vec{p}\cdot \vec{X}\ dx=-\int_{\Bbb R^{6}}p_kX^k\ dx.$$
\\
By the fundamental theorem of calculus and after picking $u_1=u_2=u$, we have
\\
$$16\pi \int_{\Bbb R_t}\int_{\Bbb R^3}|u(x,t)|^{4}dx \lesssim \sup_{t}|M(t)|.$$
It remains to estimate $M(t)$. Since $|\vec{X}|\leq 1$ we have that
\\
$$|M(t)| \lesssim \int_{\Bbb R^6}|\vec{p}|\ dx \lesssim \int_{\Bbb R^{3} \times \Bbb R^{3}}\rho(y)|\Im \vec{p}(z)|dy\ dz=\left(\int_{\Bbb R^3}\rho(y)dy\right)\int_{\Bbb R^3}|u(z)||\nabla u(z)|dz.$$
By the momentum estimate in the appendix of \cite{tt},
$$|M(t)| \lesssim \|u(t)\|_{L^2}^2\|u(t)\|_{\dot{H}^{\frac{1}{2}}}^{2}.$$
We obtain
$$\int_{\Bbb R_t}\int_{\Bbb R^3}|u(x,t)|^{4}dx \lesssim \sup_{t}\left( \|u(t)\|_{L^2}^2\|u(t)\|_{\dot{H}^{\frac{1}{2}}}^{2}\right).$$
\\
It remains to show that
$$\int_{\Bbb R^6}(\nabla_{j} X^{k})\sigma_{k}^{j}dx \geq 0.$$
But 
$$X^{j}=\nabla^{j}d=\left\{
\begin{array}{cc}
\frac{y^{j}-z^{j}}{d}, & \text{if} \quad  j=1,2,3\\
\frac{z^{j}-y^{j}}{d}, & \text{if} \quad j=4,5,6.
\end{array}
\right.
$$
\\
The $6\times 6$ matrix $\nabla_{j}X^k$ is given by
$$\nabla_{j}X^k=
\begin{pmatrix}
b_{j}^{k} & -b_{j}^{k}\\
-b_{j}^{k} & b_{j}^{k}
\end{pmatrix}
$$
where $b_{jk}$ is the $3 \times 3$ matrix given by
$$b_{jk}=\frac{\delta_{kj}d^2-(y_k-z_k)(y_j-z_j)}{d^{3}}$$
\\
for $j,k=1,2,3$. Thus, the matrix $\nabla_{j}X^k$ is positive semi-definite and we obtain 
\\
$$\int_{\Bbb R^6}(\nabla_{j} X^{k})\sigma_{k}^{j}dx \geq 0.$$
\\
Since the calculations have appeared in \cite{ckstt4}, we omit the details.
\\
\\
{\it Remark.} We can use the same idea and correlate 4 one dimensional solutions. We define the tensor product of the four solutions and our new vector field is the gradient of the distance function to the
 diagonal $(x,x,x,x) \in \Bbb R^4$. For details see \cite{chvz}. The estimate one can obtain is
\\
$$\int_{\Bbb R_t}\int_{\Bbb R}|u(x,t)|^{8}dx \lesssim \sup_{t}\left( \|u(t)\|_{L^2}^7\|u(t)\|_{\dot{H}^{1}}\right).$$
\\
A variant of this idea will be used in the proof of our main theorem that follows.
\vskip 0.3 in

\begin{proof}[Proof of Theorem \ref{thm1}]
If we repeat the calculations we did for the 3d case to the two dimensional Schr\"odinger equation we obtain
\begin{equation}\label{con2d}
\partial_{t}M=-\partial_{t}\int_{\Bbb R^{4}}p_{k}X^{k}dx=\int_{\Bbb R^4}(div \vec{X})\Phi(\rho)dx+\int_{\Bbb R^4}-\Delta (div \vec{X})\rho \ dx+\int_{\Bbb R^4}(\nabla_{j} X^{k})\sigma_{k}^{j}dx
\end{equation}
where 
\\
$$\rho=\frac{1}{2}|u_1(x_{1},t)|^2|u_2(x_{2},t)|^2=2\rho(x_1)\rho(x_2),\ \ \ \ \Phi(\rho)=4\rho_1\rho_2(\rho_1+\rho_2),$$ 
\\
and $x=(x_1,x_2) \in \Bbb R^2 \times \Bbb R^2$. To find the appropriate vector field consider in $\Bbb R^2$ the line that passes through the origin and
 has direction given by the unit vector $\vec{\omega}$,
$$L(\vec{0},\vec{\omega}):=\{\vec{x}(l)=l\vec{\omega}\}.$$
Without loss of generality we can take $\omega=(1,0)$. We can now lift this line onto a diagonal of $\Bbb R^4$ as follows
$$\tilde{L}(\omega):=\{\vec{x}_1=\vec{x}_2=\vec{x}(l)\}.$$
\\
Consider now the distance of a point 
$$\vec{y}=(\vec{x}_1,\vec{x}_2)\in \Bbb R^2 \times \Bbb R^2:\vec{y}=(y_1,y_2,y_3,y_4) \in \Bbb R^4$$ 
from the line
\\
$$\tilde{L}(\omega):=\{\vec{x}_1=\vec{x}_2=\vec{x}(l)\}=\{\vec{y}\in \Bbb R^4: y_1=l,\ y_2=0,\ y_3=l,\ y_4=0\}.$$
This distance is
\\
$$d=\min_{l \in \Bbb R}\sqrt{(y_1-l)^2+y_2^2+(y_3-l)^2+y_4^2}=$$
\\
$$\min_{l \in \Bbb R}\sqrt{\frac{(y_1-y_3)^2}{2}+y_2^2+y_4^2+2(l-\frac{y_1+y_3}{2})^2}.$$
\\
For $l=\frac{y_1+y_3}{2}$ we have that
$$d=\sqrt{\frac{(y_1-y_3)^2}{2}+y_2^2+y_4^2}.$$
\\
We set $X^{j}=\nabla^{j}d,\ \ j=1,2,3,4$ and compute 
\\
$$X^{1}=\frac{y_1-y_3}{2d},\ \ \ X^{2}=\frac{y_2}{d},\ \ \ X^{3}=\frac{y_3-y_1}{2d},\ \ \ X^{4}=\frac{y_4}{d}.$$
\\
Notice that $|\vec{X}| \leq 1$. We set $s=\frac{y_1-y_3}{\sqrt{2}}$ so that $d^2=\sqrt{s^2+y_2^2+y_4^2}$ and we compute
\\
$$\nabla_1X^1=\frac{1}{2d}-\frac{s^2}{2d^3},\ \ \ \nabla_{2}X^1=-\frac{sy_2}{\sqrt{2}d^3}, \ \ \ \nabla_{3}X^1=-\frac{1}{2d}+\frac{s^2}{2d^3},\ \ \ \nabla_{4}X^1=-\frac{sy_4}{\sqrt{2}d^3}$$
\\
$$\nabla_{1}X^2=-\frac{sy_2}{d^3},\ \ \ \nabla_{2}X^2=\frac{s^2+y_4^{2}}{d^3},\ \ \ \nabla_{3}X^2=\frac{sy_2}{d^3},\ \ \ \nabla_{4}X^2=-\frac{y_2y_4}{d^3}$$
\\
$$\nabla_1X^3=-\frac{1}{2d}+\frac{s^2}{2d^3},\ \ \ \nabla_{2}X^3=\frac{sy_2}{\sqrt{2}d^3}, \ \ \ \nabla_{3}X^3=\frac{1}{2d}-\frac{s^2}{2d^3},\ \ \ \nabla_{4}X^3=\frac{sy_4}{\sqrt{2}d^3}$$
\\
$$\nabla_{1}X^4=-\frac{sy_4}{d^3},\ \ \ \nabla_{2}X^4=-\frac{y_2y_4}{d^3},\ \ \ \nabla_{3}X^4=\frac{sy_4}{d^3},\ \ \ \nabla_{4}X^4=\frac{s^2+y_2^2}{d^3}.$$
\\
We notice that
$$\nabla_{j}X^{k}=\nabla_{k}X^{j}, \ j\ne k,\ \ j,k=1,2,3,4$$
and 
$$div \vec{X}=\nabla_{1}X^1+\nabla_2X^2+\nabla_3X^3+\nabla_4X^4=\frac{2}{d} \geq 0.$$
The first term on the right hand side of \eqref{con2d} is positive and we can write
\\
$$\partial_t M \geq \int_{\Bbb R^4}-\Delta (div \vec{X})\rho \ dx+\int_{\Bbb R^4}(\nabla_{j} X^{k})\sigma_{k}^{j}dx.$$
\\
Now we compute using the fact that the tensor $\sigma_{jk}=2\Re(\nabla_{j}\bar{u}\nabla_{k}u)$ is symmetric
$$\nabla_{k}X^{j}\sigma_{j}^{k}=\sum_{i=1}^{4}\sigma_{i}^{i}\nabla_{i}X^{i}+2\sigma_{12}(\nabla_{2}X^1)+2\sigma_{13}(\nabla_{3}X^{1})+2\sigma_{14}(\nabla_{4}X^1)+$$
\\
$$2\sigma_{23}(\nabla_{3}X^{2})+2\sigma_{24}(\nabla_{4}X^{2})+2\sigma_{34}(\nabla_{4}X^{3}).$$
\\
In addition, we have that
\\
$$\nabla_{1}X^{1}=\nabla_{3}X^3=-\nabla_{3}X^1,\ \ \ \nabla_{2}X^1=-\nabla_{3}X^{2},\ \ \ \nabla_{4}X^1=-\nabla_{4}X^3$$
and thus
\\
$$\nabla_{k}X^{j}\sigma_{j}^{k}=(\sigma_{11}+\sigma_{33}-2\sigma_{13})\nabla_{1}X^{1}+2(\sigma_{12}-\sigma_{23})\nabla_{1}X^2+2(\sigma_{14}-\sigma_{34})\nabla_{1}X^4+$$
\\
$$2\sigma_{24}\nabla_{2}X^4+\sigma_{22}\nabla_{2}X^2+\sigma_{44}\nabla_{4}X^4.$$
\\
For $s=\frac{y_1-y_3}{\sqrt{2}}$ we have $\nabla_{s}=\frac{\nabla_{y_{1}}-\nabla_{y_3}}{\sqrt{2}}$
and thus
\\
$$\sigma_{11}+\sigma_{33}-2\sigma_{13}=2|(\nabla_{1}-\nabla_{3})u|^2=4|\nabla_{s}u|^2,$$
\\
$$2(\sigma_{12}-\sigma_{23})=4\Re(\nabla_{2}u\nabla_{s}\bar{u}),\ \ \ \ 2(\sigma_{14}-\sigma_{34})=4\Re(\nabla_{4}u\nabla_{s}\bar{u}),\ \ \ \ 2\sigma_{24}=4\Re(\nabla_{4}u\nabla_{4}\bar{u}).$$
\\
If we put everything together we compute
\\
$$\nabla_{k}X^{j}\sigma_{j}^{k}=4|\nabla_{s}u|^2\left(\frac{1}{2d}-\frac{s^2}{2d^3}\right)-4\frac{sy_2}{d^3}\Re(\nabla_{2}u\nabla_{s}\bar{u})-4\frac{sy_4}{d^3}\Re(\nabla_{4}u\nabla_{s}\bar{u})-$$
\\
$$4\frac{y_2y_4}{d^3}\Re(\nabla_{2}u\nabla_{4}\bar{u})+2|\nabla_{2}u|^{2}\frac{s^2+y_4^2}{d^3}+2|\nabla_{4}u|^{2}\frac{s^2+y_2^2}{d^3}=$$
\\
$$\frac{2}{d}\left( |\nabla_{s}u|^2+|\nabla_{2}u|^2+|\nabla_{4}u|^2-\frac{1}{d^2}|(s\nabla_{s}+y_{2}\nabla_{2}+y_{4}\nabla_{4})u|^2\right).$$
\\
Thus if $z=(s,y_2,y_4)$ we obtain
\\
$$\nabla_{k}X^{j}\sigma_{j}^{k}=\frac{2}{d}\left( |\nabla_{z}u|^2-\frac{1}{|z|^2}|(z\cdot \nabla_{z})u|^2\right) \geq 0.$$
\\
Since this term is also positive we have
\\
$$\partial_t M \geq \int_{\Bbb R^4}-\Delta (div \vec{X})\rho \ dx.$$
But 
$$\Delta=\frac{\partial^2}{\partial y_{1}^{2}}+\frac{\partial^2}{\partial y_{2}^{2}}+\frac{\partial^2}{\partial y_{3}^{2}}+\frac{\partial^2}{\partial y_{4}^{2}}=\frac{\partial^2}{\partial s^{2}}+
\frac{\partial^2}{\partial y_{2}^{2}}+\frac{\partial^2}{\partial y_{4}^{2}}=\Delta_{(s,y_2,y_4)}.$$
\\
Then
$$-\Delta (div \vec{X})=-\Delta (\frac{2}{d})=-\Delta_{z}(\frac{2}{|z|})=8\pi \delta(\vec{z})=8\pi \delta(\vec{x_1}=\vec{x_2}=\vec{x}(l))$$
and
$$\int_{\Bbb R^4}-\Delta (div \vec{X})\rho \ dx=\int_{\Bbb R^4}8\pi \delta(\vec{x_1}=\vec{x_2}=\vec{x}(l))\rho\ dx=8\pi\int_{L}\rho(\vec{x}(l),t)dl.$$
Thus
$$\int_{\Bbb R^4}-\Delta (div \vec{X})\rho \ dx=2\pi\int_{L}|u(\vec{x}(l),t)|^{4}dl,$$
\\
if we pick
$$\rho_{1}=\frac{1}{2}|u_1|^2=\frac{1}{2}|u_2|^2=\rho_{2}=\frac{1}{2}|u|^2$$
in which case 
$$\rho =2\rho_1 \rho_2=\frac{1}{2}|u|^4.$$
We now apply the fundamental theorem of calculus and obtain
\\
$$\int_{\Bbb R}\int_{L}|u(\vec{x}(l),t)|^{4}dldt \lesssim \sup_{t}|M(t)|,$$ 
\\
and estimating as before we have
\\
$$\int_{\Bbb R}\int_{L}|u(\vec{x}(l),t)|^{4}dldt \lesssim  \sup_{t}\|u(t)\|_{L^2}^2\|u(t)\|_{\dot{H}^{\frac{1}{2}}}^{2}.$$
\\
Now consider the line $L_{i}$ that passes through the origin and has an angle $\theta_{i}$ with the $x-$axis. We parametrize the line by $\vec{x}_{i}(r)=r\vec{\omega}_{i}$ where $\omega_{i}=e^{i\theta_{i}}$. 
We thus identify $\omega_{i}=(\cos(\theta_{i}), \sin(\theta_{i}))$ where $\theta_{i}$ is fixed.
We have proved that
\\
$$\int_{\Bbb R}\int_{L_{i}}|u(\vec{x}(l),t)|^{4}dldt =\int_{\Bbb R}\int_{0}^{\infty} |u(re^{i\theta_{i}},t)|^{4}drdt \lesssim  \sup_{t}\|u(t)\|_{L^2}^2\|u(t)\|_{\dot{H}^{\frac{1}{2}}}^{2}.$$
\\
We cut the unit disk into $N$ sectors each one confined within $\Delta \theta_{i}=\frac{2\pi}{N}$. We apply the previous inequality for every direction $\vec{\omega}_{k}=e^{ik\frac{2\pi}{N}}$ for $k=1,2,...,N$. 
We then divide by $N$. This averaging argument yields
\\
$$\frac{1}{N}\sum_{i=1}^{N}\int_{\Bbb R}\int_{0}^{\infty} |u(re^{i\theta_{i}},t)|^{4}drdt \lesssim  \sup_{t}\|u(t)\|_{L^2}^2\|u(t)\|_{\dot{H}^{\frac{1}{2}}}^{2}$$
\\
or
$$\int_{\Bbb R}\int_{0}^{\infty} \sum_{i=1}^{N}|u(re^{i\theta_{i}},t)|^{4}\frac{dr}{N}dt\lesssim  \sup_{t}\|u(t)\|_{L^2}^2\|u(t)\|_{\dot{H}^{\frac{1}{2}}}^{2}$$
\\
or
$$\int_{\Bbb R}\int_{0}^{\infty} \frac{1}{2\pi}\sum_{i=1}^{N}|u(re^{i\theta_{i}},t)|^{4}\Delta \theta_{i}drdt\lesssim  \sup_{t}\|u(t)\|_{L^2}^2\|u(t)\|_{\dot{H}^{\frac{1}{2}}}^{2}.$$
\\
Taking $N\rightarrow \infty$ we have
\\
$$\int_{\Bbb R}\int_{0}^{\infty}\int_{0}^{2\pi}|u(re^{i\theta},t)|^{4}d\theta drdt\lesssim  \sup_{t}\|u(t)\|_{L^2}^2\|u(t)\|_{\dot{H}^{\frac{1}{2}}}^{2}$$
and
\\
$$\int_{\Bbb R}\int_{0}^{\infty}\int_{0}^{2\pi}\frac{|u(re^{i\theta},t)|^{4}}{r}d\theta (rdr)dt= \int_{\Bbb R_{t}\times \Bbb R^2} \frac{|u(x,t)|^{4}}{|x|}dxdt \lesssim  \sup_{t}\|u(t)\|_{L^2}^2\|u(t)\|_{\dot{H}^{\frac{1}{2}}}^{2}.$$
\\
Since the Schr\"odinger equation is translation invariant for any $x_{0} \in \Bbb R^2$ we have that
\\
$$\sup_{x_{0}}\int_{\Bbb R_{t}\times \Bbb R^2} \frac{|u(x,t)|^{4}}{|x-x_{0}|}dxdt \lesssim  \sup_{t}\|u(t)\|_{L^2}^2\|u(t)\|_{\dot{H}^{\frac{1}{2}}}^{2}$$
\\
and the theorem is proved.
\end{proof}
{\it Remarks.} a) One can consider any curve on $\Bbb R^2$ and lift it onto a diagonal of $\Bbb R^4$ as follows
$$\tilde{L}(\omega)=\{\vec{x}_1=\vec{x}_2=\vec{x}(l)\},$$
\\
where $\vec{x}(l)$ is the parametric representation of the curve. Following a similar strategy we can prove the following a priori estimate on the restricted $L_{t}^4L_{C}^{4}$ norm of $u$
\\
$$\int_{\Bbb R}\int_{C}|u(\vec{x}(l),t)|^{4}dldt \lesssim  \sup_{t}\|u(t)\|_{L^2}^2\|u(t)\|_{\dot{H}^{\frac{1}{2}}}^{2}.$$
\\
b) The method of contracting the momentum equation with vector fields
that are the gradient of certain distant functions is quite general. One can obtain estimates in higher dimensions using this method. We will
 discuss these estimates and their application in future work.

\end{document}